\documentclass{amsart}
\usepackage{tikz}
\usepackage{hyperref}   
\usepackage{amssymb,xcolor}
\usepackage{stmaryrd} 

\usepackage{mathtools}  
\mathtoolsset{showonlyrefs}

\theoremstyle{plain}

\theoremstyle{plain}
\newtheorem{theorem}{Theorem}[section]
\theoremstyle{plain}
\newtheorem{lemma}[theorem]{Lemma}
\theoremstyle{remark}

\theoremstyle{plain}
\newtheorem{proposition}[theorem]{Proposition}
\numberwithin{equation}{section}

\begin{document}

\title{A perturbation of spacetime Laplacian equation}

\author{Xiaoxiang Chai}
\address{Korea Institute for Advanced Study, Seoul 02455, South Korea}
\email{xxchai@kias.re.kr}

\begin{abstract}
  We study a perturbation
  \begin{equation}
    \Delta u + P | \nabla u| = h | \nabla u|,
  \end{equation}
  of spacetime Laplacian equation in an initial data set $(M, g, p)$ where $P$
  is the trace of the symmetric 2-tensor $p$ and $h$ is a smooth function.
\end{abstract}

{\maketitle}

Stern {\cite{stern-scalar-2019}} introduced a level set method of harmonic
maps and he used it to simplify proofs of some important results in scalar
curvature geometry of three manifolds. See {\cite{bray-scalar-2019}} for a
Neumann boundary version. Later developments include simplified proofs of
positive mass theorem {\cite{bray-harmonic-2019,hirsch-spacetime-2020}},
Gromov's dihedral rigidity conjecture for three dimension cubes and mapping
torus of hyperbolic 3-manifolds {\cite{chai-scalar-2021}}, and hyperbolic
positive mass theorem {\cite{bray-spacetime-2021}}.

The first approach to positive mass theorems used stable minimal surface
{\cite{schoen-proof-1979}}, Jang equation {\cite{schoen-proof-1981}} and
stable marginally outer trapped surfaces {\cite{eichmair-spacetime-2016}}. The
hyperbolic positive mass theorem could be established via a study of constant
mean curvature 2 surface in asymptotically hyperbolic manifolds under extra
assumption on the mass aspect function (see {\cite{andersson-rigidity-2008}}).
See also a Jang equation proof {\cite{sakovich-jang-2021}}.

Gromov {\cite{gromov-metric-2018,gromov-four-2021}} further generalized
minimal surface approach to stable $\mu$-bubbles. The merit of a stable
$\mu$-bubble is possibility of weaker scalar curvature condition. We take a
simplest example: if a three manifold $(M, g)$ admits a function $h \in
C^{\infty} (M)$ such that the scalar curvature $R_g$ satisfies
\begin{equation}
  R_g + h^2 - 2 | \nabla^g h| \geqslant 0, \label{weak scalar non-negativity}
\end{equation}
then the topology of a stable $\mu$-bubble of prescribed mean curvature $h$ in
$(M^3, g)$ can be still classified using the stability condition and
{\cite{fischer-colbrie-structure-1980}}.

We generalize the (spacetime) harmonic function approach of
{\cite{bray-harmonic-2019,hirsch-spacetime-2020}} to deal the weaker condition
similar to \eqref{weak scalar non-negativity}. An initial data set $(M, g, p)$
is a Riemannian manifold $(M, g)$ equipped with an extra symmetric 2-form $p$.
We write $P =\ensuremath{\operatorname{tr}}p$, the spacetime Hessian
{\cite{hirsch-spacetime-2020}} is defined to be
\begin{equation}
  \bar{\nabla}^2 u = \nabla^2 u + p | \nabla u|, u \in C^2 (M) .
  \label{spacetime hessian}
\end{equation}
Given a smooth function $h$ on $M$, we study the solution to the equation
\begin{equation}
  \Delta u + P | \nabla u| = h | \nabla u| . \label{h spacetime harmonic}
\end{equation}
The equation is a perturbation to the spacetime harmonic function, and the
solution $u$ is spacetime harmonic if $h = 0$. We follow {\cite[Proposition
3.2]{hirsch-spacetime-2020}} in detail and establish the analogous
proposition.

\begin{proposition}
  \label{level set method}Let $(U, g, p)$ be a 3-dimensional oriented compact
  initial data set with smooth boundary $\partial U$, having outward unit
  normal $\eta$. Let $u : U \to \mathbb{R}$ be a spacetime harmonic function,
  and denote the open subset of $\partial U$ on which $| \nabla u| \neq 0$ by
  $\partial_{\neq 0} U$. If $\bar{u}$ and $\underline{u}$ denote the maximum
  and minimum values of $u$ and $\Sigma_t$ are $t$-level sets, then
\begin{align}
& \int_{\partial_{\neq 0} U} (\partial_{\eta} | \nabla u| + p (\nabla u,
\eta)) \mathrm{d} A \\
\geqslant & \int_{\underline{u}}^{\bar{u}} \int_{\Sigma_t} \left(
\frac{1}{2}  \frac{| \bar{\nabla}^2 u|^2}{| \nabla u|^2} + (\mu + J (\nu)
+ h^2 - 2 h P + 2 \langle \nu, \nabla h \rangle) - K \right) \mathrm{d} A
\mathrm{d} t, \label{level set method main formula}
\end{align}
  where $\nu = \tfrac{\nabla u}{| \nabla u|}$, $K$ is Gauss curvature of the
  level set and $\mathrm{d} A$ is the area element.
\end{proposition}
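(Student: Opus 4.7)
The plan is to follow the Bochner--coarea template of \cite{hirsch-spacetime-2020} with modifications that absorb the extra $h|\nabla u|$ on the right side of \eqref{h spacetime harmonic}. Specifically, I would derive a pointwise identity for $\Delta\varphi$, with $\varphi=|\nabla u|$, dress it using the Einstein constraints and the Gauss equation on the level sets $\Sigma_t$, integrate over $U$, and apply coarea to read off \eqref{level set method main formula}.

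First I would regularise $\varphi_\epsilon=\sqrt{|\nabla u|^2+\epsilon}$ and apply Bochner, producing $|\nabla^2 u|^2$, $\operatorname{Ric}(\nabla u,\nabla u)$, and the cross term $\langle\nabla u,\nabla\Delta u\rangle$. Because $\Delta u=(h-P)|\nabla u|$, differentiating this relation contributes a $|\nabla u|\langle\nabla h,\nabla u\rangle$ piece---the source of the $2\langle\nu,\nabla h\rangle$ in \eqref{level set method main formula}---and a $|\nabla u|\langle\nabla P,\nabla u\rangle$ piece that will combine with the divergence constraint $J=\operatorname{div}(p-Pg)$ to yield $J(\nu)$ after integration by parts. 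I would then rewrite $|\nabla^2 u|^2$ through $\bar\nabla^2 u=\nabla^2 u+p|\nabla u|$, generating $|\bar\nabla^2 u|^2$ along with cross terms $2|\nabla u|\langle p,\nabla^2 u\rangle+|\nabla u|^2|p|^2$; the trace part combines with $P\Delta u=(h-P)P|\nabla u|^2$ to produce the $h^2-2hP$ block once the Einstein constraint $2\mu=R_g+P^2-|p|^2$ is used to eliminate the $R_g$ entering through $\operatorname{Ric}(\nabla u,\nabla u)$.

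Next I would apply the Gauss equation $2\operatorname{Ric}(\nu,\nu)=R_g-2K+|A|^2-H^2$ on $\Sigma_t$, together with the orthogonal decomposition of $|\nabla^2 u|^2$ into tangential, mixed, and $\nu\nu$ parts. The level-set mean curvature satisfies $H|\nabla u|=\nabla^2 u(\nu,\nu)-\Delta u$, which permits a complete-the-square: after retaining $\tfrac12|\bar\nabla^2 u|^2/|\nabla u|^2$ in the integrand, the remainder packages as a non-negative quadratic (essentially the trace-free tangential spacetime Hessian paired with an $H$-aligned square), which I discard to obtain the inequality in \eqref{level set method main formula} rather than an equality. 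Integrating the resulting identity over $U$ and invoking the divergence theorem collects the boundary integrand $\partial_\eta|\nabla u|+p(\nabla u,\eta)$; coarea converts the bulk integral into the level-set form, and sending $\epsilon\to 0$ while restricting the boundary contribution to $\partial_{\neq 0}U$ is routine using the standard $L^1$ control on $|\nabla^2 u|/|\nabla u|$ near the critical set.

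The main obstacle will be the bookkeeping in the second paragraph: the perturbation contributions generated by $h$ in Bochner, by $p$ in the spacetime Hessian expansion, and by $\operatorname{div}(p-Pg)$ after integration by parts must collapse \emph{exactly} into $h^2-2hP+2\langle\nu,\nabla h\rangle$, with no stray flux on $\partial U$ and no sign errors in the non-negative square that is discarded. Checking the $h=0$ limit (recovering \cite{hirsch-spacetime-2020}) and the $p=0$, $h=0$ limit (recovering a Stern-type identity) serves as a useful consistency test.
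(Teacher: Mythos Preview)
Your plan coincides with the paper's proof: regularize $|\nabla u|$, apply Bochner, substitute $\Delta u=(h-P)|\nabla u|$ and the spacetime Hessian, invoke the Gauss equation and $2\mu=R_g+P^2-|p|^2$, apply coarea, and integrate $\langle p,\nabla^2 u\rangle$ by parts to produce $J(\nu)$ together with the boundary flux $p(\nabla u,\eta)$. One correction worth flagging: no non-negative square is discarded---the Hessian/second-fundamental-form bookkeeping closes as an exact identity, and the only sources of the ``$\geqslant$'' are the regularization step $\phi_\delta\geqslant|\nabla u|$ and the Kato-bound/Sard handling of the critical set (also fix the sign slips in your Gauss equation and mean-curvature formula: the paper uses $2\operatorname{Ric}(\nu,\nu)=R_g-2K-|A|^2+H^2$ and $|\nabla u|H=\Delta u-\nabla^2 u(\nu,\nu)$).
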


\begin{proof}
  Recall the Bochner formula
  \begin{equation}
    \frac{1}{2} \Delta | \nabla u|^2 = | \nabla^2 u|^2
    +\ensuremath{\operatorname{Ric}} (\nabla u, \nabla u) + \langle \nabla u,
    \nabla \Delta u \rangle .
  \end{equation}
  For $\delta > 0$ set $\phi_{\delta} = (| \nabla u|^2 +
  \delta)^{\tfrac{1}{2}}$, and use the Bochner formula to find
\begin{align}
\Delta \phi_{\delta} = & \frac{\Delta | \nabla u|^2}{2 \phi_{\delta}} -
\frac{| \nabla | \nabla u |^2 |^2}{4 \phi_{\delta}^3} \\
\geqslant & \frac{1}{\phi_{\delta}}  (| \nabla^2 u|^2 - | \nabla | \nabla
u||^2 +\ensuremath{\operatorname{Ric}}(\nabla u, \nabla u) + \langle
\nabla u, \nabla \Delta u \rangle) . \label{laplacian regularized
gradient}
\end{align}
  On a regular level set $\Sigma$, the unit normal is $\nu = \tfrac{\nabla
  u}{| \nabla u|}$ and the second fundamental form is given by $A_{i j} =
  \frac{\nabla_i \nabla_j u}{| \nabla u|}$, where $\partial_i$ and
  $\partial_j$ are tangent to $\Sigma$. We then have
  \begin{equation}
    |A|^2 = | \nabla u|^{- 2}  (| \nabla^2 u|^2 - 2| \nabla | \nabla u||^2 +
    [\nabla^2 u (\nu, \nu)]^2),
  \end{equation}
  and the mean curvature satisfies
  \begin{equation}
    | \nabla u| H = \Delta u - \nabla^2_{\nu} u \label{laplacian decomposition
    on level set}
  \end{equation}
  Furthermore by taking two traces of the Gauss equations
  \begin{equation}
    2 \mathrm{Ric} (\nu, \nu) = R_g - R_{\Sigma} - |A|^2 + H^2,
  \end{equation}
  where $R_g$ is the scalar curvature of $U$ and $R_{\Sigma}$ is the scalar
  curvature of the level set $\Sigma$. Combining these formulas with
  \eqref{laplacian regularized gradient} produces
\begin{align}
\Delta \phi_{\delta} \geqslant & \frac{1}{\phi_{\delta}}  \left( |
\nabla^2 u|^2 - | \nabla | \nabla u||^2 + \langle \nabla u, \nabla \Delta
u \rangle + \frac{| \nabla u|^2}{2}  (R_g - R_{\Sigma} + H^2 - |A|^2)
\right) \\
= & \frac{1}{2 \phi_{\delta}}  (| \nabla^2 u|^2 + (R_g - R_{\Sigma}) |
\nabla u|^2 + 2 \langle \nabla u, \nabla \Delta u \rangle + (\Delta u)^2 -
2 (\Delta u) \nabla_{\nu}^2 u) .
\end{align}
  Let us now replace the Hessian with the spacetime Hessian via the relation
  \eqref{spacetime hessian}, and utilize \eqref{h spacetime harmonic} to find
\begin{align}
& \Delta \phi_{\delta} \\
\geqslant & \frac{1}{2 \phi_{\delta}}  (| \bar{\nabla}^2 u - p| \nabla
u||^2 + (R_g - R_{\Sigma}) | \nabla u|^2 + 2 \langle \nabla u, \nabla (-
P| \nabla u| + h| \nabla u|) \rangle  \\
& \quad + (- P| \nabla u| + h| \nabla u|)^2 - 2 (- P| \nabla u| + h|
\nabla u|) \nabla_{\nu}^2 u) .
\end{align}
  Noting that
  \begin{equation}
    \langle \nabla u, \nabla | \nabla u| \rangle = \frac{1}{2}  \langle \nu,
    \nabla | \nabla u|^2 \rangle = u^i \nabla_{i \nu} u = | \nabla u|
    \nabla_{\nu}^2 u,
  \end{equation}
  and expanding $| \bar{\nabla}^2 u - p| \nabla u||^2$, we have that
\begin{align}
& \Delta \phi_{\delta} \\
\geqslant & \frac{1}{2 \phi_{\delta}}  (| \bar{\nabla}^2 u|^2 - 2 \langle
p, \nabla^2 u \rangle | \nabla u| - |p|_g^2 | \nabla u|^2 + (R_g -
R_{\Sigma}) | \nabla u|^2  \\
& \quad + 2 | \nabla u|  \langle \nabla u, - \nabla P + \nabla h \rangle
+ (- P| \nabla u| + h| \nabla u|)^2) .
\end{align}
  Regrouping using the energy density $2 \mu = R_g + P^2 - |p|_g^2$,
\begin{align}
& \Delta \phi_{\delta} \\
\geqslant & \frac{1}{2 \phi_{\delta}} [| \bar{\nabla}^2 u |^2 - 2 \langle
p, \nabla^2 u \rangle | \nabla u | + (2 \mu - R_{\Sigma}) | \nabla u|^2 -
2 | \nabla u|  \langle \nabla u, \nabla P \rangle \\
& \quad + | \nabla u|^2 (h^2 - 2 h P + 2 \langle \nu, \nabla h \rangle] .
\label{final non-integrated}
\end{align}
  Consider an open set $\mathcal{A} \subset [\underline{u}, \bar{u}]$
  containing the critical values of $u$, and let $\mathcal{B} \subset
  [\underline{u}, \bar{u}]$ denote the complementary closed set. Then
  integrate by parts to obtain
  \begin{equation}
    \int_{\partial U} \langle \nabla \phi_{\delta}, v \rangle = \int_U \Delta
    \phi_{\delta} = \int_{u^{- 1} (\mathcal{A})} \Delta \phi_{\delta} +
    \int_{u^{- 1} (\mathcal{B})} \Delta \phi_{\delta} .
  \end{equation}
  According to Kato type inequality {\cite[Lemma 3.1]{hirsch-spacetime-2020}}
  and \eqref{laplacian regularized gradient} there is a positive constant
  $C_0$, depending only on $\ensuremath{\operatorname{Ric}}_g$, $P - h$ and
  its first derivatives such that
  \begin{equation}
    \Delta \phi_{\delta} \geqslant - C_0  | \nabla u| .
  \end{equation}
  An application of the coarea formula to $u : u^{- 1} (\mathcal{A}) \to
  \mathcal{A}$ then produces
  \begin{equation}
    - \int_{u^{- 1} (\mathcal{A})} \Delta \phi_{\varepsilon} \leqslant C_0 
    \int_{u^{- 1} (\mathcal{A})} | \nabla u| = C_0  \int_{t \in \mathcal{A}} |
    \Sigma_t | \mathrm{d} t, \label{error estimate}
  \end{equation}
  where $| \Sigma_t |$ is the 2-dimensional Hausdorff measure of the $t$-level
  set $\Sigma_t$. Next, apply the coarea formula to $u : u^{- 1} (\mathcal{B})
  \to \mathcal{B}$ together with \eqref{final non-integrated} to obtain
\begin{align}
& \int_{u^{- 1} (\mathcal{B})} \Delta \phi_{\delta} \\
\geqslant & \frac{1}{2}  \int_{t \in \mathcal{B}} \int_{\Sigma_t} \frac{|
\nabla u|}{\phi_{\delta}}  \left[ \frac{| \bar{\nabla}^2 u|^2}{| \nabla
u|^2} + 2 \mu - R_{\Sigma_t} - \frac{2}{| \nabla u|} (\langle p, \nabla^2
u \rangle + \langle \nabla u, \nabla P \rangle) \right] \mathrm{d} A
\mathrm{d} t \\
& \quad + \frac{1}{2}  \int_{t \in \mathcal{B}} \int_{\Sigma_t}
\frac{1}{\phi_{\delta}} (h^2 - 2 h P + 2 \langle \nu, \nabla h \rangle)
\mathrm{d} A \mathrm{d} t.
\end{align}
  Combining with \eqref{error estimate} and
  \begin{equation}
    \int_{u^{- 1} (\mathcal{B})} \Delta \phi_{\delta} = \int_U \Delta
    \phi_{\delta} - \int_{u^{- 1} (\mathcal{A})} \Delta \phi_{\delta} =
    \int_{\partial U} \partial_{\eta} \phi_{\delta} - \int_{u^{- 1}
    (\mathcal{A})} \Delta \phi_{\delta},
  \end{equation}
  we obtain
\begin{align}
& \int_{\partial U} \partial_{\eta} \phi_{\delta} + C_0  \int_{t \in
\mathcal{A}} | \Sigma_t | \mathrm{d} t \\
\geqslant & \frac{1}{2}  \int_{t \in \mathcal{B}} \int_{\Sigma_t} \frac{|
\nabla u|}{\phi_{\delta}}  \left[ \frac{| \bar{\nabla}^2 u|^2}{| \nabla
u|^2} + 2 \mu - R_{\Sigma_t} - \frac{2}{| \nabla u|} (\langle p, \nabla^2
u \rangle + \langle \nabla u, \nabla P \rangle) \right] \mathrm{d} A
\mathrm{d} t \\
& \quad + \frac{1}{2}  \int_{t \in \mathcal{B}} \int_{\Sigma_t}
\frac{1}{\phi_{\delta}} (h^2 - 2 h P + 2 \langle \nu, \nabla h \rangle)
\mathrm{d} A \mathrm{d} t. \label{final integrated}
\end{align}
  \
  
  On the set $u^{- 1} (\mathcal{B})$, we have that $| \nabla u|$ is uniformly
  bounded from below. In addition, on $\partial_{\neq 0} U$ it holds that
  \begin{equation}
    \partial_{\eta} \phi_{\delta} = \frac{| \nabla u|}{\phi_{\delta}}
    \partial_{\eta} | \nabla u| \rightarrow \partial_{\eta} | \nabla u| 
    \text{ as } \delta \rightarrow 0.
  \end{equation}
  Therefore, the limit $\delta \to 0$ may be taken in \eqref{final
  integrated}, resulting in the same bulk expression except that
  $\phi_{\varepsilon}$ is replaced by $| \nabla u|$, and with the boundary
  integral taken over the restricted set. Furthermore, by Sard's theorem (see
  {\cite[Remark 3.3]{hirsch-spacetime-2020}}) the measure $|\mathcal{A}|$ of
  $\mathcal{A}$ may be taken to be arbitrarily small. Since the map $t \mapsto
  | \Sigma_t |$ is integrable over $[\underline{u}, \bar{u}]$ in light of the
  coarea formula, by then taking $|\mathcal{A}| \to 0$ we obtain
  
  Lastly integration by parts gives
  \begin{equation}
    \int_U \langle p, \nabla^2 u \rangle = \int_U p^{ij} \nabla_{ij} u =
    \int_{\partial U} p (\nabla u, \eta) - \int_U u^i \nabla^j p_{ij},
  \end{equation}
  and recalling that $J = \mathrm{div}_g (p - Pg)$ and $R_{\Sigma_t} = 2
  K_{\Sigma_t}$, yields the desired result.
\end{proof}

Now we discuss two special boundary conditions of the equation \eqref{h
spacetime harmonic} namely Dirichlet and Neumann boundary conditions and its
geometric implications on the boundary contribution
\begin{equation}
  \int_{\partial_{\neq 0} U} (\partial_{\eta} | \nabla u| + p (\nabla u,
  \upsilon)) \mathrm{d} A \label{boundary contribution}
\end{equation}
in \eqref{level set method main formula}. We assume that $u \in C^2 (U \cup
S)$ where $S$ is a relatively open portion of $\partial U$. This is a valid
assumption due to an existence theorem of {\cite[Section
4]{hirsch-spacetime-2020}}.

We have the following:

\begin{lemma}
  Assume the solution $u$ to \eqref{h spacetime harmonic} takes constant
  values on $S$, then
  \begin{equation}
    \partial_{\eta} | \nabla u| + p (\nabla u, \eta) = (- H_S
    -\ensuremath{\operatorname{tr}}_S p + h) | \nabla u| .
  \end{equation}
\end{lemma}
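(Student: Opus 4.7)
The plan is to reduce the boundary integrand to data on $S$ via three ingredients: $\nabla u$ is parallel to $\eta$ along $S$, the normal-tangential decomposition of $\Delta u$ at $S$, and the PDE \eqref{h spacetime harmonic}.

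Since $u|_S$ is constant, the tangential gradient of $u$ vanishes on $S$, so $\nabla u$ is parallel to $\eta$ there. Assuming the two point in the same direction (the statement is implicitly for this orientation), $\nabla u = |\nabla u|\eta$ on $S$, so $\partial_\eta u = |\nabla u|$. Differentiating $|\nabla u|^2 = g(\nabla u,\nabla u)$ in the direction $\eta$ then gives
\begin{equation}
\partial_\eta|\nabla u| = \frac{\nabla^2 u(\nabla u,\eta)}{|\nabla u|} = \nabla^2 u(\eta,\eta) \quad \text{on } S.
\end{equation}

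Next, I would invoke the standard normal-tangential splitting $\Delta u = \nabla^2 u(\eta,\eta) + \Delta_S u + H_S\,\partial_\eta u$, where $H_S = \operatorname{div}_S\eta$ is the mean curvature of $S$ with respect to $\eta$. Constancy of $u|_S$ kills the intrinsic Laplacian $\Delta_S u|_S$, and \eqref{h spacetime harmonic} rewrites the left-hand side as $(h-P)|\nabla u|$. Together these produce $\nabla^2 u(\eta,\eta) = (h-P-H_S)|\nabla u|$ on $S$. Since $p(\nabla u,\eta) = p(\eta,\eta)|\nabla u|$ and $P = p(\eta,\eta) + \operatorname{tr}_S p$, the two pieces combine into
\begin{equation}
\partial_\eta|\nabla u| + p(\nabla u,\eta) = \bigl(h - P - H_S + p(\eta,\eta)\bigr)|\nabla u| = (-H_S - \operatorname{tr}_S p + h)|\nabla u|,
\end{equation}
as required.

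The computation is essentially algebraic once the boundary decomposition of $\Delta u$ is in hand; the only point that requires care is the orientation convention (whether $\nabla u$ is aligned or anti-aligned with $\eta$ on $S$, which in the application corresponds to $u$ attaining its maximum or minimum on $S$) and the matching sign convention for $H_S$. No substantial obstacle arises.
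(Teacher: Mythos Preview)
Your proof is correct and follows essentially the same route as the paper: both note that $\nabla u$ is parallel to $\eta$ along $S$, compute $\partial_\eta|\nabla u| = \nabla^2 u(\eta,\eta)$, apply the normal--tangential decomposition of $\Delta u$ together with the PDE to obtain $\nabla^2 u(\eta,\eta) = (h-P-H_S)|\nabla u|$, and then finish using $P = p(\eta,\eta) + \operatorname{tr}_S p$. The paper likewise restricts to the case $\eta = \nabla u/|\nabla u|$ and flags the other orientation as analogous, just as you do.
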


\begin{proof}
  Since $u$ is constant on $S$, then by using the decomposition of the
  Laplacian $\Delta$,
  \begin{equation}
    - P | \nabla u| + h | \nabla u| = \Delta u = H_S \langle \nabla u, \eta
    \rangle + \nabla^2 u (\eta, \eta) . \label{laplacian decomposition}
  \end{equation}
  The decomposition is already used in \eqref{laplacian decomposition on level
  set}. Since $u$ is constant on $S$, $\nabla u$ either point outward or
  inward of $U$. We calculate only the case when $\nabla u$ points outward,
  that is $\eta = \nabla u / | \nabla u|$,
\begin{align}
& \partial_{\eta} | \nabla u| + p (\nabla u, \eta) \\
= & \tfrac{1}{| \nabla u|} (\nabla^2 u) (\nabla u, \eta) + p (\nabla u,
\eta) \\
= & (\nabla^2 u) (\eta, \eta) + p (\eta, \eta) | \nabla u| \\
= & - H_S | \nabla u| - P | \nabla u| + p (\eta, \eta) | \nabla u| + h |
\nabla u| \\
= & (- H -\ensuremath{\operatorname{tr}}_S p + h) | \nabla u| .
\end{align}
  \ 
\end{proof}

\begin{lemma}
  \label{neumann prepare}Suppose that $\tfrac{\partial u}{\partial \eta} = 0$
  on $S$, then
  \begin{equation}
    \partial_{\eta} | \nabla u| = \tfrac{1}{| \nabla u|} B (\nabla u, \nabla
    u),
  \end{equation}
  where $B$ is the second fundamental form of $S$ in $U$.
\end{lemma}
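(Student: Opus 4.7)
\emph{Proof proposal.} The plan is to reduce the statement to the identification $\nabla^2 u(\nabla u, \eta) = B(\nabla u, \nabla u)$ along $S$, which will follow from the chain rule together with the observation that the Neumann condition forces $\nabla u$ to be tangent to $S$.

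First I would differentiate $|\nabla u|^2 = g(\nabla u, \nabla u)$ in the $\eta$ direction. The chain rule gives
\[ \partial_\eta |\nabla u| = \frac{1}{|\nabla u|}\,\nabla^2 u(\nabla u, \eta), \]
so the lemma reduces to verifying that the Hessian term on the right equals $B(\nabla u, \nabla u)$ along $S$.

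Next, I would exploit the Neumann condition. The equation $\eta u = \langle \nabla u, \eta\rangle = 0$ on $S$ says precisely that $\nabla u$ is tangent to $S$ at every boundary point. After smoothly extending $\eta$ to a neighborhood of $S$, the product rule yields, in that neighborhood,
\[ \nabla u(\eta u) = \nabla^2 u(\nabla u, \eta) + \langle \nabla_{\nabla u}\eta, \nabla u\rangle. \]
Evaluated at a point $p \in S$, the left-hand side depends only on the restriction of $\eta u$ to $S$ (since $\nabla u|_p \in T_p S$), and that restriction vanishes identically, so the left-hand side is zero. The Weingarten identity $B(X, Y) = -\langle \nabla_X \eta, Y\rangle$ for $X, Y$ tangent to $S$ then identifies the remaining term as $-B(\nabla u, \nabla u)$, giving the desired equality.

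The calculation itself is short, so the main obstacle is just care with conventions: one must fix the sign convention for $B$ (inherited from the preceding lemma via the mean curvature convention used there) and verify that the tangential-derivative argument $\nabla u(\eta u)|_p = 0$ is legitimate, i.e.\ that the right-hand side expression is independent of the particular smooth extension of $\eta$ chosen off $S$. Both points are routine but deserve to be checked explicitly before assembling the final identity.
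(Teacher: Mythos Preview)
Your argument is correct and is essentially the paper's own proof written coordinate-free: the paper computes $\partial_\eta|\nabla u|=\tfrac{1}{|\nabla u|}\eta_j u^i\nabla_i\nabla^j u$, applies the Leibniz rule to pull $\eta_j$ inside, and uses $\partial_\eta u=0$ together with tangency of $\nabla u$ to kill the term $\nabla_i u\,\nabla^i(\eta_j\nabla^j u)$, exactly as you do. Your caution about the sign convention is well placed---the paper's surrounding text takes $B(X,Y)=\langle\nabla_X\eta,Y\rangle$, so its own proof actually lands on $-\tfrac{1}{|\nabla u|}B(\nabla u,\nabla u)$, the sign opposite to the lemma's statement; with your convention $B(X,Y)=-\langle\nabla_X\eta,Y\rangle$ the statement comes out as written.
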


\begin{proof}
  First,
\begin{align}
& \partial_{\eta} | \nabla u| \\
= & \tfrac{1}{| \nabla u|} \eta_j \nabla_i u \nabla^i \nabla^j u
\\
= & \tfrac{1}{| \nabla u|} [\nabla_i u \nabla^i (\eta_j \nabla^j u) -
\nabla^i u \nabla^j u \nabla^i \eta_j] \\
= & - \tfrac{1}{| \nabla u|} B (\nabla u, \nabla u),
\end{align}
  we have used the boundary condition $\partial u / \partial \eta = 0$.
\end{proof}

Note that we have not used $u$ is a solution to \eqref{h spacetime harmonic}.
On a regular level set $\Sigma_t$, let $e_1$ be a unit tangent vector of
$\partial \Sigma_t$, then $\{\eta, \nu = \tfrac{\nabla u}{| \nabla u|}, e_1
\}$ forms an orthonormal basis. We see that the geodesic curvature of
$\partial \Sigma_t$ in $\Sigma_t$ is given by
\begin{equation}
  \kappa_{\partial \Sigma_t} = \langle \nabla_{e_1} \eta, e_1 \rangle
\end{equation}
since $\tfrac{\partial u}{\partial \eta} = 0$ implies that $\eta$ is
orthogonal to $\partial \Sigma_t$ in $\Sigma_t$ and points outward of
$\Sigma_t$. So
\begin{align}
& B (\nu, \nu) = \langle \nabla_{\nu} \eta, \nu \rangle \\
= & (\langle \nabla_{\nu} \eta, \nu \rangle + \langle \nabla_{e_1} \eta, e_1
\rangle) - \langle \nabla_{e_1} \eta, e_1 \rangle \\
= & H_S - \kappa_{\partial \Sigma_t} .
\end{align}
Therefore, we conclude the following.

\begin{lemma}
  \label{neumann}Assume the solution $u$ to \eqref{h spacetime harmonic} takes
  constant values on $S$, at a point in $S \cap \partial \Sigma_t$ with
  $\Sigma_t$ being a regular level set, then
  \begin{equation}
    \partial_{\eta} | \nabla u| + p (\nabla u, \eta) = (- H_S + p
    (\tfrac{\nabla u}{| \nabla u|} {,} \eta) + \kappa_{\partial \Sigma_t}) |
    \nabla u| .
  \end{equation}
\end{lemma}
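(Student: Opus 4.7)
The plan is to assemble Lemma~\ref{neumann prepare} with the orthonormal-frame computation displayed just before the lemma statement. The relevant boundary hypothesis here is $\partial u/\partial \eta = 0$ on $S$ (as suggested by the label and the setup). This hypothesis forces $\nabla u$ to be tangent to $S$ everywhere along $S$, so at a point of $S\cap \partial \Sigma_t$ the unit level-set normal $\nu = \nabla u/|\nabla u|$ lies in $TS$, the outward normal $\eta$ of $S$ in $U$ coincides with the outward conormal of $\partial\Sigma_t$ inside $\Sigma_t$, and the triple $\{\eta,\nu,e_1\}$ constructed earlier forms an orthonormal frame adapted simultaneously to $S$ and to $\Sigma_t$.

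First I would apply Lemma~\ref{neumann prepare} and use the identity $\nabla u = |\nabla u|\,\nu$ at the boundary point to rewrite
\begin{equation}
\partial_\eta |\nabla u| = \tfrac{1}{|\nabla u|} B(\nabla u,\nabla u) = |\nabla u|\, B(\nu,\nu).
\end{equation}
Next, I would substitute the identity $B(\nu,\nu) = H_S - \kappa_{\partial\Sigma_t}$ that is essentially proved in the paragraph preceding the lemma: one decomposes the trace $H_S = B(\nu,\nu) + B(e_1,e_1)$ along the frame $\{\nu,e_1\}\subset TS$ and then recognizes $B(e_1,e_1) = \langle \nabla_{e_1}\eta, e_1\rangle = \kappa_{\partial\Sigma_t}$, the latter equality being precisely the computation of the geodesic curvature of $\partial\Sigma_t$ in $\Sigma_t$ made possible by the Neumann condition. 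Combining these yields $\partial_\eta|\nabla u| = (-H_S + \kappa_{\partial\Sigma_t})|\nabla u|$ (once one fixes the sign convention of $B$ consistently). Adding $p(\nabla u,\eta) = |\nabla u|\, p(\nu,\eta)$, which again uses $\nabla u = |\nabla u|\,\nu$, then produces exactly the claimed identity.

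There is no substantive obstacle: every ingredient is already available, and the proof is a direct assembly. The only point demanding care is sign bookkeeping for $B$, namely that the convention used in Lemma~\ref{neumann prepare} is the one making $H_S = \operatorname{tr}_S B$ and $\kappa_{\partial\Sigma_t} = \langle \nabla_{e_1}\eta,e_1\rangle$ compatible. The Neumann condition carries the geometric content of the argument, since it is precisely what lets $\eta$ play the dual role of outward normal to $S$ in $U$ and outward conormal of $\partial\Sigma_t$ in $\Sigma_t$, allowing both trace decompositions to be read off the same frame.
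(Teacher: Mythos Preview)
Your proposal is correct and follows essentially the same approach as the paper, which obtains Lemma~\ref{neumann} directly from Lemma~\ref{neumann prepare} together with the preceding frame computation $B(\nu,\nu)=H_S-\kappa_{\partial\Sigma_t}$. You also correctly flag both the intended Neumann hypothesis (the ``constant values'' wording in the statement is a slip) and the sign convention that must be reconciled with the proof of Lemma~\ref{neumann prepare}.
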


So to study rigidity questions on initial data sets with boundary, it is
natural to assume
\begin{equation}
  \mu + J (\nu) + h^2 - 2 h P + 2 | \nabla h| \geqslant 0,
\end{equation}
and the convexity condition on the boundary
\begin{equation}
  H_S \geqslant |\ensuremath{\operatorname{tr}}_S p - h|,
\end{equation}
or
\begin{equation}
  H_S \geqslant p (\eta, e_0)
\end{equation}
where $e_0$ is any unit vector on $S$. With $h = 0$, the conditions on the
boundary are termed boundary dominant energy conditions by
{\cite{almaraz-spacetime-2019}} in their study of initial data sets with a
noncompact boundary.

\

\text{{\bfseries{Acknowledgment}}} Research of Xiaoxiang Chai is supported by
KIAS Grants under the research code MG074402. I would also like to thank Tin
Yau Tsang (UCI) and Sven Hirsch (Duke) for discussions on the level set
technique of harmonic functions.

\end{document}